\documentclass{article}
\usepackage[utf8]{inputenc}

\usepackage{hyperref}
\usepackage[style=alphabetic, maxnames=10, maxalphanames=4, doi=false, isbn=false]{biblatex}
\usepackage{amsmath,amsfonts}
\usepackage{amsthm}
\usepackage{amssymb}
\usepackage{amsopn}
\usepackage{xcolor}
\usepackage{tikz}
\usetikzlibrary{arrows.meta}

\usepackage{cleveref}

\theoremstyle{plain}
\newtheorem{theorem}{Theorem}[section]

\newtheorem{corollary}[theorem]{Corollary}
\newtheorem{lemma}[theorem]{Lemma} 

\theoremstyle{definition}

\DeclareMathOperator{\Div}{Div}
\DeclareMathOperator{\sgn}{sgn}

\title{On the linearised force balance condition in the analysis of atomistic dislocation models}
\author{Julian Braun and Thomas Hudson}
\date{\today}

\def\sfb{\mathsf{b}}
\def\sfe{\mathsf{e}}
\def\C{\mathbb{C}}
\def\N{\mathbb{N}}
\def\R{\mathbb{R}}
\def\d{\mathrm{d}}

\begin{document}

\maketitle


\begin{abstract}
A technical condition arising in the analysis of atomistic models for dislocations is studied. Key estimates for the far-field strain behaviour proved in \cite{ehrlacher_analysis_2016} rely on a summation-by-parts argument for linearised forces, under conditions of sufficient decay and of vanishing net force in an infinite system. In particular, the latter condition is required for an application of this theory to the standard far-field dislocation predictor, but the vanishing of the associated force sum has not been fully justified. Here, the missing verification of this condition is provided and its role in determining the far-field decay of the corrector is clarified. It is moreover shown that for more general physically compatible predictors, the linearised force sum need not vanish, leading to slower decay and potentially divergent finite-domain approximations.
\end{abstract}

\section{Introduction}
\label{sec:intro}
In the low-temperature regime, many important aspects of the behaviour of defects can be uncovered through the identification of critical points in an appropriate model potential energy surface. The study of crystalline defects through molecular statics simulations is therefore an important tool available to computational materials scientists. Practitioners use a range of optimisation algorithms on interatomic potential energies to identify these critical points and thereby provide microscopic input to predictions of macroscopic material behaviour. However, given limits on computational resources these simulations are limited to finite domains often containing between $10^3$ and $10^7$ atoms, with the precise number depending on the materials system, model complexity, and the hardware available. As such, the choice of boundary conditions for a simulation introduces a further source of numerical error in addition to error induced by the parametrisation of the interatomic potential. When studying defects that induce slowly decaying strain fields, such as dislocations and cracks, the numerical errors arising from the choice of boundary conditions can be particularly significant. This challenge has led to the development of a range of methods, such as flexible boundary conditions \cite{sinclair_flexible_1978}, lattice Green’s function methods \cite{trinkle_lattice_2008}, and atomistic--to--continuum coupling approaches such as the Quasicontinuum method \cite{luskin_atomistic--continuum_2013}.

While a range of boundary conditions have long been used in computational practice, \cite{ehrlacher_analysis_2016} provides one of the first comprehensive mathematical treatments of the effect of boundary conditions on the accuracy of finite molecular statics simulations. In particular, the main theorems therein provide error bounds on the atomistic strains and energies identified through relaxation of appropriate finite-size system of atoms centred at a point defect or dislocation of interest. Furthermore, this work developed the first `regularity theory' for the far-field displacements around defects, ideas which have been developed further and exploited numerically in a range of subsequent works \cite{buze_analysis_2019,braun_effect_2019,buze_analysis_2020,braun_asymptotic_2022,braun_higher-order_2025,braun_incompleteness_2025,duque_lopez_bridging_2026, wei2025higherorderboundaryconditionsatomistic}.

A key idea underlying the analytical results of \cite{ehrlacher_analysis_2016} is the use of a linearisation of atomic interactions in the far-field. This is possible since the strains induced by a defect decay away from the defect core. This opens up the use of a range of techniques to estimate the residuals which arise from the truncation of the problem to a finite domain, and ultimately enables the characterisation of asymptotic errors as the size of the atomic domain increases.
A key technical result that underpins much of the analysis is \cite[Corollary~1]{ehrlacher_analysis_2016}, which states that a force field $f$ can be `summed by parts' if it is known both that $f$ decays sufficiently quickly and that the total sum of these forces vanishes. More specifically, it shows that it is possible to find an atomistic stress field $g$ whose discrete divergence is $f$, and that $g$ moreover decays at a faster rate than $f$. This result is applied at several key points throughout the analysis to the boundary predictor fields used; notably, it is employed in proving \cite[Lemma~16]{ehrlacher_analysis_2016} concerning the far field decay of the corrector fields around dislocations. However, in the proof of this lemma, the crucial zero force sum condition
\[\sum_{\ell \in \Lambda} f(\ell) =0\] needed to apply Corollary~1 is not carefully verified.

In light of this gap, the present work provides the missing argument to complete this proof, verifying that the standard dislocation predictor used as a boundary condition does indeed satisfy the zero force sum condition. For a pure screw dislocation a more straightforward argument can be made, and this is already contained in the results of \cite{braun_asymptotic_2022}. However, the approach taken there cannot be applied to edge or mixed dislocations, and so the results presented here identify the additional requirements on the dislocation predictor necessary to achieve force balance in the presence of a residual stress field.

Given the more recent developments in our understanding of the theory, this also provides an opportunity to highlight the physical interpretation of this force balance condition. Indeed, we show that for more general but physically-compatible choices of far-field predictor, the sum need not vanish. In such cases, the decay of displacements may be significantly worse than that predicted by the analysis in \cite{ehrlacher_analysis_2016}, leading to much larger numerical errors in finite simulation boxes and possible divergence of the approximation. Nevertheless, we will show that such issues can be circumvented through an appropriate modification of the predictor field through the addition of terms which are multiples of the elastic Green's function.

\paragraph{Outline.} The remainder of the paper is structured as follows: In Section~\ref{sec:prelims}, we recall the notation used in \cite{ehrlacher_analysis_2016}, and provide details of the geometric set-up of the domain and the energy used in the analysis. In Section~\ref{sec:main_result}, we provide a precise statement of our main result, and discuss its consequences; Section~\ref{sec:proof} is then devoted to the proof of the result.

\section{Preliminaries}
\label{sec:prelims}

\subsection{Geometric set-up}
\label{sec:geometry}
The geometric setting for the problem we study is illustrated in Figure~\ref{fig:geometry}. In particular, we consider a two-dimensional reference Bravais lattice $\Lambda\subset\R^2$ which is the orthogonal projection of a three-dimensional Bravais lattice. We denote points in the 2D lattice $\ell\in\Lambda$. Throughout, we will use subscripts to indicate the Cartesian coordinates of points, i.e. $\ell = (\ell_1,\ell_2)\in\R^2$.

\begin{figure}[b!]
\centering
\begin{tikzpicture}[scale=0.8]
\def\r{1.4}
\def\L{4}

\begin{scope}
\clip (0,-\L) rectangle (\L,\L);
\fill[gray!30, even odd rule]
  (0,-\L) rectangle (\L,\L)
  (0,0) circle (\r);
\textbf{\node at ({0.75*\L},{0.75*\L}) {$\Omega_\Gamma$};}
\end{scope}

\draw[->] (-\L,0) -- ({1.1*\L},0);
\draw[->] (0,-\L) -- (0,{1.1*\L});

\draw[ultra thick,red] (0,0) -- (\L,0);
\node[above left,red] at (0.55*\L,0) {$\Gamma$};

\fill (0,0) circle (3pt);
\node[above left] at (0,0) {$\hat{x}$};

\foreach \i in {-4,...,3}
{
  \foreach \j in {-4,...,3}
  {
    \fill[blue!50] (\i+0.5,\j+0.5) circle (3pt);
  }
}
\node[blue] at ({-0.75*\L},{0.75*\L}) {$\Lambda$};

\end{tikzpicture}
\caption{The geometric setup. The defect centre is shown at $\hat{x}$; in blue, the lattice $\Lambda$; in red, the branch cut $\Gamma$, and in gray, the region $\Omega_\Gamma$, which is the right-hand half plane with a ball of radius $r$ about $\hat{x}$ removed. As shown, this radius may be larger the lattice spacing.}
\label{fig:geometry}
\end{figure}
Following the setup in \cite{ehrlacher_analysis_2016} we consider a dislocation centred at a point $\hat{x} = (\hat{x}_1,\hat{x}_2)\in\R^2$; this point represents the position of the dislocation line as it cuts the the two-dimensional plane in the continuum fields we define. Let $\Gamma\subset\R^2$ be a branch cut which extends from $\hat{x}$ to the right, i.e.
\[
\Gamma = \{(x_1,x_2)\in\R^2:x_2=\hat{x}_2,x_1\geq\hat{x}_1\},
\]
and set $\Omega_\Gamma$ to be the right-hand half plane with a disc of radius $r$ about $\hat{x}$ removed, so
\[
\Omega_\Gamma = \{(x_1,x_2)\in\R^2:x_1\geq \hat{x}_1\}\setminus B_{r}(\hat{x}).
\]

Throughout, we use the notation $\sfb = (b_1,b_2,b_3)$ to indicate the possibly three-dimensional Burgers vector of the dislocation. Following the convention in \cite{ehrlacher_analysis_2016} we write $\sfb_{12}$ to indicate the projection of the Burgers vector onto its first two components, i.e. $\sfb_{12} = P_{\mathrm{2D}}\sfb = (b_1,b_2)$. Since is is a topological requirement that $\sfb$ must be a lattice vector, $\sfb_{12}$ must lie in a plane within the reference lattice we consider. Without of loss generality therefore, we are free to rotate the coordinate system such that the $\sfe_1$ direction lies parallel to a family of lattice planes, and such that the component $b_2$ vanishes; this again follows the convention used in \cite{ehrlacher_analysis_2016}. As is standard in the materials science literature, we will occasionally refer to the `edge' and `screw' components of the Burgers vector: in our geometric setup therefore, the screw component is $b_3$, and the edge component is $b_1$. We note that since the case of a pure screw dislocation where $b_1=0$ is already covered by the analysis of \cite{braun_asymptotic_2022}, we focus on the case where $b_1\neq 0$ throughout.

\subsection{Continuum predictor}
To impose the correct behaviour in the far-field away from the dislocation, we introduce a predictor displacement field derived from linear elasticity theory, denoted $u_0:\R^2\to\R^2$. To define this linear elastic predictor, the analysis in \cite{ehrlacher_analysis_2016} begins by seeking the solution to the following elastic force balance problem:
\begin{equation}
\begin{aligned}
\nabla\cdot\left(\C\nabla u^{\text{lin}}\right) &= 0&\quad x&\in\R^2\setminus\Gamma,\\
\big[u^\text{lin}(x)\big]^+_- & = -\sfb &x&\in\Gamma\setminus\{\hat{x}\},\\
\big[(\C\nabla u^{\text{lin}})\mathsf{e}_2\big]^+_- &= 0&x&\in\Gamma\setminus\{\hat{x}\},
\end{aligned}
\label{eq:ulin_problem}
\end{equation}
where the notation $[\,\cdot\,]^+_-$ denotes the jump of a given field across $\Gamma$: in particular, for $x=(x_1,0)\in\Gamma$,
$$
[v(x)]^+_- := \lim_{x_2\to 0+} v(x_1,x_2)-\lim_{x_2\to 0-} v(x_1,x_2).
$$
The natural choice of continuum linear elasticity (CLE) tensor $\C$ arising in our atomistic setting is discussed in more detail below in section \Cref{sec:energy}. We point out at this stage that $u_0\neq u^{\mathrm{lin}}$, but the predictor is constructed from $u^{\mathrm{lin}}$, as we will see below.

The PDE system in the bulk is simply the standard force balance for linear elasticity. The first boundary condition on the branch cut $\Gamma$ enforces that $u^{\text{lin}}$ has a jump corresponding to the Burgers vector of the dislocation, and the second boundary condition enforces that there is no jump in the normal stress across $\Gamma$.

As this PDE problem is posed in a domain with a cusp at $\hat{x}$ due to the removal of the ray $\Gamma$, the conditions above are in fact insufficient to determine a unique solution in $\mathrm{C}^\infty(\R^2\setminus\Gamma)$. Indeed, given a solution to this problem, we are free to add any multiple of the elastic Green's function (or its derivatives) centred at $\hat{x}$, and all of the conditions above will continue to be satisfied. Adding multiples of the Green's function and its derivatives centred at $\hat{x}$ in this way is equivalent to adding a force line or force multipoles along the dislocation line. A further possibility is to add any linear displacement of the form $Ax$ under the condition that displacement preserves the jump conditions across $\Gamma$. In an isotropic case, a class of simple example strains which preserve the solution is to add strains which uniformly expand or compress the lattice, i.e. where $A=\alpha I$ for some $\alpha\in\R$.

As in the case of modelling fracture, there is however a standard canonical solution to this problem, where we require (1) that there are no force lines or multipoles at the dislocation line, and (2) that the resulting strains decay at infinity. The former condition can be expressed as requirement that
$$
\lim_{r\to0}\int_{\partial B_r(\hat{x})} x^{\otimes n}\otimes\sigma^{\text{lin}}:\nu\,\mathrm{d}x = 0^{\otimes (n+1)},\quad\text{where}\quad \sigma^{\text{lin}} = \C\nabla u^{\text{lin}}.
$$
Here, $x^{\otimes n}$ means the $n$-fold tensor product of the position vector $x$ with itself and $\nu$ denotes the outward-pointing unit normal field on $\partial B_r(\hat{x})$. The second condition can be enforced by requiring that total strain gradient $\lvert\nabla u^{\text{lin}}(x)\rvert\to 0$ as $|x|\to\infty$ within $\R^2\setminus\Gamma$.

Under these additional conditions, the resulting solution $u^{\text{lin}}$ has a logarithmic singularity at $\hat{x}$. This canonical solution (which is known explicitly in the isotropic case) is the one which is usually reported in the standard literature on dislocations, see for example \cite[Chapter~2]{hirth_theory_1992}. Moreover, we also have the estimates
\begin{equation}\label{eq:ulin_derivative_bound}
    \lvert\nabla^j u^{\text{lin}}(x)\rvert \leq C_j |x|^{-j}\quad\text{for all }x\in\R^2\setminus\Gamma,
\end{equation}
for some fixed constant $C_j$, and for any $j\in\N$; these decay properties can be proven by using a Fourier characterisation of $u^{\mathrm{lin}}$, for example based upon the approach outlined in \cite{bacon_anisotropic_80}. We will discuss the possibility of other reference solutions where we relax the canonical requirements when interpreting our main result.

For our analysis, we will require smoothness of the continuum predictor under the operation of a particular lattice shift operator we define below. As defined, $u^{\text{lin}}$ does not satisfy this property, and so we will modify the canonical solution to address this. To do so, we following \cite{ehrlacher_analysis_2016} in defining the mapping $\xi:\R^2\setminus\Gamma\to\R^2$ by
$$
\xi(x) := x-\frac{\sfb_{12}}{2\pi}\arg(x-\hat{x})\eta\big(\tfrac{|x-\hat{x}|}{r}\big),
$$
where $\eta:\R\to[0,1]$ is $\mathrm{C}^\infty$, and satisfies $\eta(s) = 0$ for $s<0$, $\eta(s) = 1$ for $s>1$ and $\eta'(s)>0$ for $0<s<1$.
With this definition, $\xi$ is a smooth diffeomorphism, except where it induces a (smoothly varying) jump across the ray $\{x\in\R^2 : x_2=\hat{x}_2,x_1\leq \hat{x}_2\}$; the same is therefore true for its inverse, $\xi^{-1}$. By construction, we have that there exist global constants $C_j>0$ such that
\begin{equation}\label{eq:xi_derivative_bound}
\begin{aligned}
\lvert\xi^{-1}(x)-x\rvert &\leq C_0\quad\text{for all }x\in\R^2\setminus\Gamma,\\
\lvert\nabla^j\xi^{-1}(x)\rvert &\leq C_j\quad\text{for all }x\in\R^2\setminus\Gamma,
\end{aligned}
\end{equation}
where the latter estimate holds for any $j\in\N$; in practice we will use only $j\in\{1,2,3\}$.

Using the definition of $\xi$, we are now in a position to define the predictor we will use as
\begin{equation}
    u_0:=u^{\mathrm{lin}}\circ \xi^{-1};
    \label{eq:predictor}
\end{equation}
this choice follows exactly the approach of \cite{ehrlacher_analysis_2016}.

\subsection{Lattice operators}\label{sec:operators}
We next recall the definitions of the various lattice shift and finite difference operators as used in \cite{ehrlacher_analysis_2016}. We define unscaled finite differences for lattice functions via
\[
D_\rho u(\ell) := u(\ell+\rho)-u(\ell),
\]
where $\rho$ is a lattice direction. We also define lattice shift operators acting on lattice functions as follows:
\begin{equation*}
\begin{aligned}
  S_0 u(\ell) &:= \begin{cases}
    u(\ell) &\ell_2>\hat{x}_2\\
    u(\ell-\sfb_{12})-\sfb & \ell_2<\hat{x}_2,
  \end{cases}\\
  S u(\ell) &:= \begin{cases}
    u(\ell) &\ell_2>\hat{x}_2\\
    u(\ell-\sfb_{12}) \hphantom{-\sfb\;} & \ell_2<\hat{x}_2,
  \end{cases}\\
  R u(\ell)&:=\begin{cases}
    u(\ell) &\ell_2>\hat{x}_2\\
    u(\ell+\sfb_{12}) \hphantom{-\sfb\;} & \ell_2<\hat{x}_2.
  \end{cases}
  \end{aligned}
\end{equation*}
Using these operators, we also define the shifted finite difference operator as follows:
\[\tilde{D}_\rho u(\ell) :=
\begin{cases}
    RD_\rho S u(\ell) & \ell \in \Omega_\Gamma \\
    D_\rho u(\ell) & \text{otherwise.}
\end{cases}
\]
Given the lattice operators $D_\rho$ and $\tilde{D}_\rho$, we can define operators $\Div$ and $\widetilde{\Div}$ by duality, i.e. given $g_\rho(\ell)$ defined for all $\ell\in\Lambda$ and $\rho\in\mathcal{R}$, we write $f = \Div g$ and $\tilde{f} = \widetilde{\Div}\,g$ if
\[
\sum_{\ell\in\Lambda}f(\ell)u(\ell) = \sum_{\ell\in\Lambda}\sum_{\rho}g_\rho(\ell)D_\rho u\quad\text{and}\quad \sum_{\ell\in\Lambda}\tilde{f}(\ell)u(\ell) = \sum_{\ell\in\Lambda}\sum_{\rho}g_\rho(\ell)\tilde{D}_\rho u.
\]
It can be shown immediately that these operators are well-defined on all compactly-supported $g$, and the domain of definition can be extended by continuity to any appropriate spaces in which such functions are dense.

\subsection{Special strain}\label{sec:specialstrain}
Applying the shifted difference operator $\tilde{D}_\rho$ to $u_0$, the continuum predictor for the displacement field as defined in \eqref{eq:predictor}, we define the \emph{special strain field} to be
\[
e_\rho(\ell) := 
\begin{cases}
    RD_\rho S_0 u_0(\ell) & \ell \in \Omega_\Gamma \\
    D_\rho u_0(\ell) & \text{otherwise.}
\end{cases}
\]
We note that this definition uses the operator $S_0$ in $\Omega_\Gamma$, while the definition $\tilde{D}$ uses $S$ in the same region, so $e_\rho$ is not simply $\tilde{D}_\rho u_0$.

As we will see through the result of \Cref{lem:finitediffops} below, $e_\rho (\ell)$ is in fact identical to the finite difference $D_\rho u_0(\ell)$ unless the line segment $(\ell,\ell+\rho)$ crosses the branch cut $\Gamma\cap\Omega_\Gamma$. In the latter case a shift is performed, which leads to the estimate $\lvert e_\rho(\ell)\vert \leq C \lvert \ell \rvert^{-1}$, even around $\Gamma$. As such, $e_\rho$ encodes a reference elastic strain, rather than the total strain $D_\rho u(\ell)$, and the operators in the definition can be viewed heuristically as allowing us to compare the displacements relative to a local reference: A global reference cannot exist due to the incompatibility induced by the presence of the dislocation. We note that due to the constant jump in $u_0$ across $\Gamma$ required to create the edge component of the dislocation, the decay estimate for $e_\rho(\ell)$ cannot hold for the finite difference (i.e. the total strain) itself.

\subsection{Elastic Energy} \label{sec:energy}

For a displacement $u \colon \Lambda \to \R^2$ we consider the energy
\[\mathcal{E}(u) = \sum_{\ell \in \Lambda} V_{\ell}(Du(\ell)) \]
with a site potential $V_\ell$ that is independent of $\ell$ outside of a compact core region, so $V=V_\ell$. The site potentials are assumed to be sufficiently smooth.

To enable us to make proper sense of dislocations, we furthermore assume the shift invariance of the site potentials under a slip in the Burgers vector direction, i.e.
\begin{align*}
    V(DS_0 v (\ell)) &= V(D v (\ell)) &\text{ for } \ell_2 > \hat{x}_2,\\
    V(DS_0 v (\ell+\sfb_{12})) &= V(D v (\ell)) &\text{ for } \ell_2 < \hat{x}_2.
\end{align*}
Physically, this is a natural consequence of the permutation invariance of atoms within the lattice.

With the site potentials defined, the resulting continuum Cauchy-Born energy density is then directly given as
\[ W(A) = c_{\Lambda} V((A \rho)_\rho)\]
where $c_\Lambda$ is the inverse volume per atom in the reference lattice. In this framework, the CLE tensor at the reference lattice is given by the second derivative of the energy density at the reference lattice, $\C := D^2 W(0)$.

\section{Main results}
\label{sec:main_result}
We now state our main results. In order to do so, we introduce the following tensor, $\sigma^0$, which in components is
\[
\sigma^0_{ij}:= \nabla W(0)_{ij} = c_\Lambda\sum_{\rho} \nabla_\rho V(0)_{i}\rho_j.
\]
Here, $\nabla_\rho V(0)$ is the partial derivative of the site energy with respect to the displacement difference $D_\rho u(\ell)$ at $D_\rho u\equiv 0$. Note that the units of $\sigma^0$ are those of stress, and $\sigma^0$ therefore encodes any residual stress that might be present when using the energy with the reference lattice selected. It is typical in atomistic modelling to perform a relaxation of the lattice such that this stress vanishes unless considering some macroscopic strain is applied to the system; our analysis however allows us to consider the possibility where this is not the case, which is likely to be of interest for some applications.

To discuss the result in a clear fashion, let us fix the line direction $\tau=\sfe_3$ of the full three dimensional setting and $n = P_{\text{2D}}\frac{\tau\times \sfb}{|\tau\times \sfb|}$, where we recall that $P_{\mathrm{2D}}$ is the projection of a 3D vector onto its first two components. The unit vector $n$ has the interpretation of being the normal to the slip plane on which a dislocation with a non-zero edge component is free to glide. Notice that thanks to our geometric set up, $n=\pm\sfe_2$, depending on the sign of the edge component of the Burgers vector $b_1$. We can now state our main results.

\begin{theorem} \label{thm:netresforce}
Defining the forces at the reference displacement
\[
f(\ell) = -\Div\big(\delta\mathcal{E}(u_0)\big)(\ell),
\]
we have that the sum of these forces can be expressed as
\[
    \sum_{\ell\in\Lambda}f(\ell) = -\sigma^0 n |\sfb_{12}| - \int_{\partial B_1(0)}\C[\nabla u^{\mathrm{lin}}]\cdot \nu \,\d\sigma.
    \]
In particular, the sum of the forces vanishes if the two terms on the right hand side balance; that is, if $u^{\mathrm{lin}}$ is chosen such that
\[-\sigma^0 n |\sfb_{12}| = \int_{\partial B_1(0)}\C[\nabla u^{\mathrm{lin}}]\cdot \nu \,\d\sigma.\]
\end{theorem}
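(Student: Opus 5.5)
The plan is to compute $\sum_\ell f(\ell)$ as a limit of force sums over large balls, and to convert each such sum into a boundary flux. For a large radius $R$, write
\[
F_R:=\sum_{\ell\in\Lambda\cap B_R(\hat x)} f(\ell).
\]
Testing the duality definition of $\Div$ with the indicator $\chi_R$ of $B_R$ (componentwise) gives
\[
F_R=-\sum_{\ell}\sum_\rho \nabla_\rho V_\ell\big(Du_0(\ell)\big)\, D_\rho\chi_R(\ell).
\]
The factor $D_\rho\chi_R(\ell)$ is nonzero only for bonds $(\ell,\ell+\rho)$ crossing $\partial B_R$. The core, where $V_\ell$ depends on $\ell$, is therefore irrelevant for large $R$. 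Before this is rigorous, I will check that $f$ is summable, so that $F_R\to\sum f$. The approach is to use the slip invariance of $V$ to rewrite $\delta\mathcal E(u_0)$ in terms of the special strain $e_\rho$. This gives $|f(\ell)|\lesssim|\ell|^{-3}$ away from $\Gamma$. Near $\Gamma$, the shifted structure gives $|f(\ell)|\lesssim|\ell|^{-3}$ as well, since by \Cref{lem:finitediffops} $e$ is smooth across the cut after the shift.

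Next I rewrite the boundary sum using the slip invariance. Bonds crossing $\partial B_R$ away from $\Gamma$ see $Du_0=e$. For bonds near $\Gamma$, the invariance identities $V(DS_0v(\ell))=V(Dv(\ell))$ let me replace $\nabla_\rho V(Du_0(\ell))$ by $\nabla_\rho V(e(\ell'))$ at a shifted site $\ell'=\ell$ or $\ell\pm\sfb_{12}$. I then Taylor expand:
\[
\nabla_\rho V(e)=\nabla_\rho V(0)+\nabla^2V(0)[e]_\rho+O(|e|^2).
\]
The quadratic error is $O(R^{-2})$ on $O(R)$ boundary bonds, so it vanishes in the limit.

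The linear term produces the continuum flux. Since $e_\rho(\ell)\approx\nabla u^{\rm lin}(x)\rho$ up to $O(R^{-2})$, the standard bond-counting argument (Cauchy--Born) turns the boundary bond sum into $-\int_{\partial B_R}\C\nabla u^{\rm lin}\,\nu$ plus an error of $O(R^{-1})$. That integral is independent of $R$ because $\Div\C\nabla u^{\rm lin}=0$ and the traction has no jump across $\Gamma$, so it equals the $\partial B_1(0)$ integral (centring at $\hat x$ versus $0$ being irrelevant by the same argument).

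The constant term $\nabla_\rho V(0)$ is where the $\sigma^0$ contribution arises, and I expect it to be the main obstacle. Without the shift, $\sum_\ell\sum_\rho\nabla_\rho V(0)D_\rho\chi_R(\ell)$ would be a telescoping sum, i.e.\ zero. The shift relabelling near $\Gamma$, however, changes which sites are counted. Below the cut, sites are effectively translated by $\sfb_{12}$, so the set of contributing bonds on the lower side of $\partial B_R$ near its intersection with $\Gamma$ is displaced by $|b_1|$ along $\sfe_1$. Counting carefully, the mismatch amounts to a strip of width $|\sfb_{12}|$ along the cut across the circle, with normal $\pm\sfe_2$. This produces $-c_\Lambda\sum_\rho\nabla_\rho V(0)\,(\rho\cdot n)|\sfb_{12}|=-\sigma^0n|\sfb_{12}|$. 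I would verify this by writing the missing and extra bonds explicitly per lattice line parallel to $\sfe_1$ and using the density $c_\Lambda$, taking care with the sign convention linking $n$ to $\sgn b_1$. Letting $R\to\infty$ then yields the stated formula, and the vanishing criterion is immediate.
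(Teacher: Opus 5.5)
Your argument is correct in outline but follows a genuinely different route from the paper.

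\textbf{The paper's route.} The paper tests with a smooth cutoff $\eta_R$, with $|\nabla\eta_R|\lesssim R^{-1}$ and additionally $\partial_2\eta_R=0$ near $\Gamma$. It uses $\delta\mathcal E(u_0)=\widetilde{\Div}\,\nabla V(e)$ to write the force sum as $\sum_\ell\nabla V(e)[\sfe_i\otimes\tilde D\eta_R]$, and splits this into the same three pieces you use. The weights are $O(R^{-1})$ and spread over an annulus of width $\sim R$, so every discrete-to-continuum step is a routine two-dimensional Riemann-sum error of order $R^{-1}$. The linear piece becomes $\int\C[\nabla u^{\rm lin},\sfe_i\otimes\nabla\eta_R]\,\d x$, which the divergence theorem on $\R^2\setminus B_1$ turns into the flux through $\partial B_1$. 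The $\sigma^0$ piece is evaluated as an integral of $\partial_1\eta_R$ over the strips $A_\rho$; this is what the condition on $\partial_2\eta_R$ is for.

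\textbf{What your sharp cutoff changes.} Using $\chi_R$ moves the difficulty elsewhere. The $\sigma^0$ piece, which you expect to be the main obstacle, becomes exact and elementary. For each $\rho$, take each of the $|\rho_2|/d$ lattice rows of $A_\rho$, with $d$ the row spacing. The sum over that row of $\chi_R(\ell+\rho+\sgn(\rho_2)\sfb_{12})-\chi_R(\ell+\rho)$ telescopes to $-\sgn(\rho_2)\,b_1/a$, where $a$ is the spacing along $\sfe_1$. Since $ad=c_\Lambda^{-1}$, the total is $-c_\Lambda\rho_2 b_1$, with no condition on the cutoff.

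Your slip-invariance replacement is best stated as the identity $\sum_\rho\nabla_\rho V(Du_0(\ell))D_\rho v(\ell)=\sum_\rho\nabla_\rho V(e(\ell))\tilde D_\rho v(\ell)$. Crossing bonds are relabelled $\rho\mapsto\rho\pm\sfb_{12}$; they are not merely moved to another site. You also address summability of $f$, which the paper uses tacitly when it writes $\sum f=\lim_R\sum\eta_R f$.

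\textbf{The linear piece is the real crux.} A generic ``Cauchy--Born bond counting'' does not deliver it. About $R$ bonds cross $\partial B_R$, each carrying about $R^{-1}$. A crude sum-versus-integral comparison for a function that jumps across $\partial B_R$ therefore only gives an $O(1)$ error, which does not vanish. To justify your $O(R^{-1})$ claim, proceed as follows.
\begin{enumerate}
\item Write $\rho=k\rho'$ with $\rho'$ primitive, and split $\Lambda$ into lattice lines parallel to $\rho'$.
\item By convexity of $B_R$, each line contributes exactly $k$ entering and $k$ exiting bonds, apart from $O(1)$ nearly tangent lines. All of these lie within $|\rho|$ of the chord endpoints $c^\pm$.
\item Set $G_\rho(x)=\big(\nabla^2V(0)[(\nabla u^{\rm lin}(x)\sigma)_\sigma]\big)_\rho$. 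Each line then contributes $k[G_\rho(c^-)-G_\rho(c^+)]+O(R^{-2})$.
\item Summing over the $O(R)$ lines is a one-dimensional Riemann sum of a function with total variation $O(R^{-1})$. This reproduces $-c_\Lambda\int_{\partial B_R}G_\rho\,(\rho\cdot\nu)\,\d\sigma$ up to $O(R^{-1})$.
\item The $O(1)$ shifted bonds near $\Gamma\cap\partial B_R$ contribute only $O(R^{-1})$.
\end{enumerate}
With this step supplied, your route is complete and gives the ``net flux through a large circle'' picture directly.

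\textbf{Global sign.} The limits you state are those of $\sum_\ell\nabla V(e)\cdot\tilde D\chi_R$. With the correct leading minus in your formula for $F_R$, both terms flip, so you actually obtain the negative of the displayed right-hand side. The paper's summation-by-parts line drops the same minus sign: its duality convention gives $\sum\eta_R(-\widetilde{\Div}\,g)=-\sum g\cdot\tilde D\eta_R$. Only the overall sign is affected, not the balance condition.
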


\noindent
A particularly important special case for practical purposes is the following corollary.

\begin{corollary} \label{cor:zeronet}
If there is no residual stress at the reference lattice, so that $\sigma^0=0$, and there is no force line at the dislocation core, meaning $\int_{\partial B_1(0)}\C[\nabla u^{\mathrm{lin}}]\cdot \nu \,\d \sigma=0$, then
\[
    \sum_{\ell\in\Lambda}f(\ell) = 0.
\]
\end{corollary}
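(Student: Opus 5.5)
The statement to prove is \Cref{cor:zeronet}. My plan is to specialise the identity of \Cref{thm:netresforce}, which we may assume. That theorem expresses the net force as
\[
\sum_{\ell\in\Lambda} f(\ell) = -\sigma^0 n |\sfb_{12}| - \int_{\partial B_1(0)}\C[\nabla u^{\mathrm{lin}}]\cdot\nu\,\d\sigma ,
\]
where the sum is understood in the sense established there, as the limit of sums over exhausting balls.

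The argument then has two steps.

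First, under the hypothesis $\sigma^0=0$, the first term vanishes identically. This holds whatever the sign of $n=\pm\sfe_2$, and whatever the edge component $b_1$ is, since $|\sfb_{12}|$ is merely a finite scalar.

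Second, the no-force-line hypothesis states exactly that the boundary integral over $\partial B_1(0)$ is zero, so the second term vanishes as well. Adding the two vanishing terms gives $\sum_{\ell}f(\ell)=0$.

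It is worth noting how this fits with the canonical construction of $u^{\mathrm{lin}}$. The flux
\[
\int_{\partial B_\varrho}\C\nabla u^{\mathrm{lin}}\cdot\nu\,\d\sigma
\]
is independent of $\varrho$. This follows from the divergence theorem on the annulus cut along $\Gamma$, using $\nabla\cdot(\C\nabla u^{\mathrm{lin}})=0$ together with the traction-continuity condition $[(\C\nabla u^{\mathrm{lin}})\sfe_2]^+_-=0$, which cancels the contributions from the two faces of the cut. Consequently the $n=0$ case of the canonical condition $\lim_{r\to0}\int_{\partial B_r}\sigma^{\mathrm{lin}}\nu=0$ is equivalent to the hypothesis as stated, so the canonical predictor of \cite{ehrlacher_analysis_2016} satisfies it.

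I do not expect any step to be a real obstacle, since all of the analytic difficulty is contained in \Cref{thm:netresforce}. The only point requiring care is the radius independence just described. It is not needed for the corollary as stated, but it is needed to apply the corollary to the canonical $u^{\mathrm{lin}}$ when the no-force-line condition is imposed as $r\to0$ rather than on $\partial B_1(0)$.
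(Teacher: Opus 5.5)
Your proposal is correct and is the paper's argument: the paper obtains the corollary by substituting $\sigma^0=0$ and the vanishing boundary integral directly into the identity of \Cref{thm:netresforce}. Your extra remark on radius independence of the flux, via the divergence theorem and traction continuity across $\Gamma$, agrees with the paper's own comment that the contour in the second term is arbitrary, and it usefully links the hypothesis to the canonical $r\to0$ condition.
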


To conclude the presentation of our results, we now make some comments about their interpretation.

We note first that the two terms on the right-hand side of the expression for the sum of the linearised forces in \Cref{thm:netresforce} each has a different origin.
The first of these terms, involving $\sigma^0$, can be interpreted as the `residual' force line at the dislocation core which arises as a result of being present in a lattice under macroscopic stress. The second integral term, encoding the total linearised elastic stress which results macroscopically from applying the predictor $u^{\mathrm{lin}}$, can be used to cancel this residual stress to make the force sum zero. An inspection of the proof shows that the choice of the contour of integration in this second term is arbitrary; the only requirement is that it is a simple closed curve encircling the dislocation line: in the statement, we fix it to be a ball of radius 1 for simplicity.

In the general case where the residual stress does not vanish, we can achieve cancellation of the two contributions by choosing $u^{\mathrm{lin}}$ to contain a `force line' contribution alongside the standard CLE dislocation predictor. For example, if $u^{\mathrm{fl}}$ is a distributional solution to
\[
\nabla\cdot \left(\C\nabla u^{\mathrm{fl}}\right) = -\delta_0 \sigma^0n |\sfb_{12}|,
\]
where $\delta_0$ is a Dirac delta centred at the dislocation core,
then it straightforward to verify that indeed
\[-\sigma^0 n|\sfb_{12}| = \int_{\partial B_1(0)}\C[\nabla u^{\mathrm{fl}}]\cdot \nu \,\d\sigma,
\]
and so upon adding this to $u^{\mathrm{lin}}$ as defined in \cref{eq:ulin_problem}, we can achieve balance balance. Note that $u^{\mathrm{fl}}$ is simply an appropriate multiple of the (two-dimensional) CLE Green's function, and can therefore be constructed with the same rate of strain decay as the dislocation predictor terms. Since both of these contributions have the same rate of decay, the inclusion of such terms is required at leading order in order to make sense of much of the analysis in \cite{ehrlacher_analysis_2016}. As noted in the introduction, without the inclusion of such terms, using the predictor $u^{\mathrm{lin}}$ alone as a boundary condition will lead to divergence of any finite-dimensional numerical approximation.

At first inspection, it might seem natural to attribute the force line contribution identified by \Cref{thm:netresforce} to a Peach-Koehler force acting on the dislocation in the presence of a residual stress field, as it has the same physical dimensions. However, considering a pure screw dislocation shows that this cannot be the case: For such dislocations, the projection of the Burgers vector onto the 2D plane $\sfb_{12}$ is zero, and so the first term on the right vanishes even if $\sigma^0$ is non-zero. In contrast, screw dislocations can indeed experience Peach-Koehler forces when subject to shear stresses, and so we cannot draw a direct connection between the Peach-Koehler force and this extra requirement.

Finally, we note that the case in which the residual stress vanishes, \Cref{cor:zeronet} provides an important standard case. As mentioned, an initial relaxation of the atomic lattice is a standard step in atomistic simulation of defects. If this step is performed, we see that this removes the need to counteract any force line contribution automatically.

\section{Proofs and Calculations}
\label{sec:proof}

\subsection{Auxiliary results}
As a first step towards the proof of \Cref{thm:netresforce}, the following lemma provides two important auxiliary results.

\begin{lemma} \label{lem:finitediffops}
    With the definitions of the operators $D$ and $\tilde{D}$ and the special strain $e$ given in \Cref{sec:operators,sec:specialstrain}, we find
    \begin{align*}
    e_\rho(\ell) &= \begin{cases}
      D_{\rho+\sfb_{12}}u_0(\ell) + \mathsf{b}\qquad\, & \ell\in\Omega_\Gamma,\ell_2<\hat{x}_2<(\ell+\rho)_2,\\
      D_{\rho-\sfb_{12}}u_0(\ell) - \mathsf{b} & \ell\in\Omega_\Gamma,(\ell+\rho)_2<\hat{x}_2<\ell_2,\\
      D_\rho u_0(\ell) & \text{otherwise.}
    \end{cases}\quad\text{and}\\
    \tilde{D}_\rho u(\ell)-D_\rho u(\ell) &= \begin{cases}
      D_{\rho+\sfb_{12}}u(\ell)-D_{\rho}u(\ell) & \ell\in\Omega_\Gamma, \ell_2<\hat{x}_2<(\ell+\rho)_2\\
      D_{\rho-\sfb_{12}}u(\ell)-D_\rho u(\ell)& \ell\in\Omega_\Gamma, (\ell+\rho)_2<\hat{x}_2<\ell_2\\
       0 & \text{otherwise.}
        \end{cases}
\end{align*}
\end{lemma}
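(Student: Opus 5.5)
The plan is to unfold the definitions directly, compute $RD_\rho S_0 u_0(\ell)$ and $RD_\rho S u(\ell)$ explicitly, and split into cases according to which side of the line $\{x_2=\hat{x}_2\}$ the points $\ell$ and $\ell+\rho$ lie on. Outside $\Omega_\Gamma$ both identities hold by definition: $e_\rho=D_\rho u_0$ and $\tilde D_\rho u = D_\rho u$. So I fix $\ell\in\Omega_\Gamma$. Since $\Lambda$ is a Bravais lattice with $\sfe_1$ parallel to lattice planes, no lattice point lies on $\{x_2=\hat{x}_2\}$ (this is implicit in the case definitions of $S_0,S,R$). Also, $\sfb_{12}=(b_1,0)$ is horizontal, so shifting by $\pm\sfb_{12}$ never changes the sign of $\ell_2-\hat{x}_2$. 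This fact drives every case.

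The first step is to evaluate $D_\rho S_0 u_0(m)=S_0u_0(m+\rho)-S_0u_0(m)$ at a general point $m$, in four cases according to the signs of $m_2-\hat{x}_2$ and $(m+\rho)_2-\hat{x}_2$. If both are positive, the result is $D_\rho u_0(m)$. If both are negative, it is $u_0(m+\rho-\sfb_{12})-\sfb-u_0(m-\sfb_{12})+\sfb=D_\rho u_0(m-\sfb_{12})$. If $m$ is below and $m+\rho$ is above, it is $u_0(m+\rho)-u_0(m-\sfb_{12})+\sfb$. In the reverse configuration it is $u_0(m+\rho-\sfb_{12})-\sfb-u_0(m)$.

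The second step is to apply $R$, i.e. evaluate at $m=\ell$ when $\ell_2>\hat{x}_2$ and at $m=\ell+\sfb_{12}$ when $\ell_2<\hat{x}_2$. The horizontality of $\sfb_{12}$ guarantees that $m$ and $m+\rho$ fall in the same case as $\ell$ and $\ell+\rho$. This gives the following:
\begin{itemize}
\item both above: $D_\rho u_0(\ell)$;
\item both below: $D_\rho u_0(\ell)$, since the shifts $+\sfb_{12}$ and $-\sfb_{12}$ cancel;
\item $\ell$ below, $\ell+\rho$ above: $u_0(\ell+\rho+\sfb_{12})-u_0(\ell)+\sfb=D_{\rho+\sfb_{12}}u_0(\ell)+\sfb$;
\item $\ell$ above, $\ell+\rho$ below: $u_0(\ell+\rho-\sfb_{12})-\sfb-u_0(\ell)=D_{\rho-\sfb_{12}}u_0(\ell)-\sfb$.
\end{itemize}
These are exactly the cases in the claimed formula for $e_\rho$.

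The third step repeats the computation for $\tilde D_\rho u=RD_\rho Su$. This is identical except that the $\mp\sfb$ constants are absent, so in the crossing cases $\tilde D_\rho u(\ell)=D_{\rho\pm\sfb_{12}}u(\ell)$ and in the non-crossing cases $\tilde D_\rho u(\ell)=D_\rho u(\ell)$. Subtracting $D_\rho u(\ell)$ then yields the second identity.

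There is no serious obstacle; the only point requiring care is the bookkeeping of signs in the crossing cases, together with the observation that the horizontality of $\sfb_{12}$ (from $b_2=0$) keeps the case classification invariant under $R$.
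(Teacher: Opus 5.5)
Your proof is correct and follows the paper's argument: compute $D_\rho S_0 u$ in the four sign cases, apply $R$ using that $\sfb_{12}\parallel\sfe_1$ keeps the case classification unchanged, then repeat with $S$ so the $\pm\sfb$ constants drop out, and subtract $D_\rho u$. One small quibble: no lattice site lying on $\{x_2=\hat{x}_2\}$ is a standing assumption on the choice of $\hat{x}$, implicit in the case definitions of $S_0,S,R$; it is not a consequence of $\Lambda$ being a Bravais lattice.
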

\begin{proof}
    We consider a general displacement $u$ and directly compute $D_\rho S_0 u(\ell)$ in the various cases possible based on the positions of $\ell$ and $\ell+\rho$. This results in the following expressions:
\begin{equation*}
    D_\rho S_0 u(\ell) = \begin{cases}
      D_\rho u_0(\ell) & \ell_2,(\ell+\rho)_2>\hat{x}_2\\
      D_\rho u(\ell-\sfb_{12}) & \ell_2,(\ell+\rho)_2<\hat{x}_2\\
      D_{\rho+\sfb_{12}}u(\ell-\sfb_{12})+\sfb & \ell_2<\hat{x}_2<(\ell+\rho)_2\\
      D_{\rho-\sfb_{12}}u(\ell)-\sfb&
      (\ell+\rho)_2<\hat{x}_2<\ell_2.
    \end{cases}
\end{equation*}
Applying the operator $R$ to this result, and noting our standing assumption that $\sfb_{12}$ is to be parallel to $\sfe_1$, so no additional cases arise, we find:
\begin{equation*}
    R D_\rho S_0 u(\ell) = \begin{cases}
      D_\rho u(\ell) & \ell_2,(\ell+\rho)_2>\hat{x}_2\\
      D_\rho u(\ell) & \ell_2,(\ell+\rho)_2<\hat{x}_2\\
      D_{\rho+\sfb_{12}}u(\ell) +\sfb & \ell_2<\hat{x}_2<(\ell+\rho)_2\\
      D_{\rho-\sfb_{12}}u(\ell)-\sfb&
      (\ell+\rho)_2<\hat{x}_2<\ell_2.
    \end{cases}
\end{equation*}
Setting $u=u_0$, this calculation directly proves the claim about $e_\rho$.

Performing the same calculation using the operator $S$ in place of $S_0$, the shifts involving $\sfb$ cancel, and we have
\begin{equation*}
    \tilde{D}_\rho u (\ell) = R D_\rho S u(\ell) = \begin{cases}
      D_\rho u(\ell) & \ell_2,(\ell+\rho)_2>\hat{x}_2\\
      D_\rho u(\ell) & \ell_2,(\ell+\rho)_2<\hat{x}_2\\
      D_{\rho+\sfb_{12}}u(\ell) & \ell_2<\hat{x}_2<(\ell+\rho)_2\\
      D_{\rho-\sfb_{12}}u(\ell)&
      (\ell+\rho)_2<\hat{x}_2<\ell_2.
    \end{cases}
\end{equation*}
Subtracting $D_\rho u$ from these expressions now yields the second result.
\end{proof}

\subsection{Proof of Theorem \ref{thm:netresforce}}
Recall from \Cref{eq:predictor} that $u_0=u^{\mathrm{lin}}\circ \xi^{-1}$. Applying the uniform estimates \cref{eq:ulin_derivative_bound} and \cref{eq:xi_derivative_bound}, the chain rule allows us to deduce that
\begin{equation*}
    \lvert \nabla^j u_0 \lvert \lesssim \lvert x\rvert^{-j}\quad\text{for all }x\in\R^2\setminus\Gamma,     
\end{equation*}
and for $j=1,2,3$. Furthermore, for any finite set of directions $\rho$, we can use the results of \Cref{lem:finitediffops} to assert that there exists a constant $C$ such that
\begin{equation}\label{eq:strain_bound}
    \lvert e_\rho(\ell)\vert \leq C \lvert \ell \rvert^{-1},
\end{equation}
for any $\ell\in\Lambda$ and any $\rho$, even when the line segment $(\ell,\ell+\rho)$ crosses $\Gamma$.

Take a parametrised family of smooth cut-off functions $\eta_R \colon \R^2 \to [0,1]$ with $\eta_R(x)=1$ for $\lvert x \rvert \leq R$, $\eta_R(x)=0$ for $\lvert x \rvert > 2R$ and $\lvert \nabla \eta_R(x) \rvert\lesssim R^{-1}$. For technical reasons which will become apparent later, we additionally make the further requirement that $\frac{\partial\eta_R}{\partial x_2} = 0$ in a neighbourhood of the branch cut $\Gamma$. Taking components, we now compute:
\begin{align*}
\sum_{\ell \in \Lambda} f_i(\ell) &= \sum_{\ell \in \Lambda} \big(-\Div\delta \mathcal{E}(u_0)\big)_i \\
&= \sum_{\ell \in \Lambda} \Big(-\widetilde{\Div} \nabla V_\ell\big(e(\ell)\big)\Big)_i\\
&= \lim_{R \to \infty} \sum_{\ell \in \Lambda} \eta_R \Big(-\widetilde{\Div} \nabla V_\ell\big(e(\ell)\big)\Big)_i.
\end{align*}
Summing by parts, which is possible since $\eta_R$ is compactly supported, we obtain
\begin{equation*}
\sum_{\ell \in \Lambda} \eta_R \Big(-\widetilde{\Div} \nabla V_\ell\big(e(\ell)\big)\Big)_i = \sum_{\ell \in \Lambda} \nabla V(e(\ell))[\sfe_i \otimes \tilde{D} \eta_R].
\end{equation*}
Adding and subtracting terms, we then write
\begin{equation}\label{eq:splitting}
\begin{aligned}
&\sum_{\ell \in \Lambda} \nabla V(e(\ell))[\sfe_i \otimes \tilde{D} \eta_R]\\
&\qquad= \sum_{\ell \in \Lambda} \Big(\nabla V(e(\ell)) - \nabla V(0) - \nabla^2 V(0)[e(\ell)]\Big)[\sfe_i \otimes \tilde{D} \eta_R]\\
&\qquad\qquad +\sum_{\ell \in \Lambda} \nabla^2 V(0)[e(\ell), \sfe_i \otimes \tilde{D} \eta_R]+ \sum_{\ell \in \Lambda} \nabla V(0)[\sfe_i \otimes \tilde{D} \eta_R].
\end{aligned}
\end{equation}
We now analyse each of the terms on the right-hand side of \cref{eq:splitting} in turn.

\paragraph{First term.} The first of the terms on the right-hand side is a linearisation error, and may be estimated as follows:
\begin{align*}
&\left\lvert \sum_{\ell \in \Lambda} \Big(\nabla V(e(\ell)) - \nabla V(0) - \nabla^2 V(0)[e(\ell)]\Big)[\sfe_i \otimes \tilde{D} \eta_R] \right\rvert\\
&\qquad\lesssim \!\!\sum_{\lvert \ell \rvert \in(\frac12R, \frac52R)} \lvert e(\ell) \rvert^2 \lvert \tilde{D} \eta_R \rvert\lesssim \!\!\sum_{\lvert \ell \rvert \in(\frac12R, \frac52R)} \lvert \ell \rvert^{-2}R^{-1}\lesssim R^{-1},
\end{align*}
where we use the assumption that $|\nabla\eta_R|\lesssim R^{-1}$, the bound \cref{eq:strain_bound}, and the result of \Cref{lem:finitediffops} to bound $\tilde{D}_\rho\eta$. We see as a result that this term vanishes in the limit $R\to\infty$.

\paragraph{Second term.} For the second term on the right-hand side of \cref{eq:splitting}, we again use the facts that $\tilde{D} \eta_R(\ell)$ and $\nabla \eta_R(\ell)$ are only non-zero for $\ell$ such that $\tfrac12R \leq \lvert \ell \rvert <\tfrac52R$, allowing us to see that
\begin{equation*}
\sum_{\ell \in \Lambda} \nabla^2 V(0)[e(\ell), \sfe_i \otimes \tilde{D} \eta_R]= \int_{\R^2 \backslash \Gamma} \C[\nabla u_{0}, \sfe_i \otimes \nabla \eta_R] \,\d x + O(R^{-1}).
\end{equation*}
The error term on the right above arises due to the fact that the leading order contribution to the discretisation error in the integrand is of the form $|\ell|^{-2}R^{-1}$, and when integrated over the appropriate annulus, this gives us an error term of the stated order in $R$.
As such, this establishes that
\begin{equation*}
\lim_{R \to \infty} \sum_{\ell \in \Lambda} \nabla^2 V(0)[e(\ell), \sfe_i \otimes \tilde{D} \eta_R]=\lim_{R \to \infty} \int_{\R^2 \backslash \Gamma} \C[\nabla u_{0}, \sfe_i \otimes \nabla \eta_R] \,\d x.
\end{equation*}
Next, in light of the bounds \cref{eq:ulin_derivative_bound,eq:xi_derivative_bound}, we have that
\[ \left\lvert \nabla u_{0}(x) - \nabla u^{\rm lin}\big(\xi^{-1}(x)\big) \right\rvert \lesssim \lvert x \rvert^{-2}.
\]
This allows us to change variable in the integral on the right above, giving:
\begin{align*}
\int_{\R^2 \backslash \Gamma} \C[\nabla u_{0}, \sfe_i \otimes \nabla \eta_R] \,\d x
&= \int_{\R^2 \backslash \Gamma} \C[\nabla u^{\rm lin}(\xi^{-1}(x)), \sfe_i \otimes \nabla \eta_R] \,\d x\\
&=\int_{\R^2 \backslash \Gamma} \C[\nabla u^{\rm lin}(x), \sfe_i \otimes \nabla \eta_R(\xi(x))] \lvert \mathrm{det} D\xi(x)\rvert \,\d x.
\end{align*}
Next, we note that as a result of the properties of $\eta_R$ and the diffeomorphism $\xi$, we have $|\nabla\eta_R(\xi(x))-\nabla\eta_R(x)|\lesssim R^{-2}$ and $|\det D\xi(x)-1|\lesssim |x-\hat{x}|^{-1}$. Using the compact support of the difference $\nabla\eta_R(\xi(x))-\nabla\eta_R(x)$ and sending $R\to\infty$,
\begin{equation*}
    \lim_{R\to\infty}\int_{\R^2 \backslash \Gamma} \C[\nabla u_{0}, \sfe_i \otimes \nabla \eta_R] \,\d x
=\lim_{R \to \infty} \int_{\R^2 \backslash \Gamma} \C[\nabla u^{\rm lin}(x), \sfe_i \otimes \nabla \eta_R(x)] \,\d x.
\end{equation*}
Now, using the compactness of the support of $\eta_R$ and its derivative once more, we restrict the integral to $\R^2\setminus B_1(\hat{x})$ where we will assume that $1<R$ to permit us to make the argument below. Using this restriction, integrating by parts on this domain, and using that the linear elastic stress induced by the predictor $\C \nabla u^{\mathrm{lin}}$ is continuous across $\Gamma$, we find that
\begin{align}\label{eq:term2}
&\int_{\R^2 \backslash \Gamma} \C[\nabla u^{\rm lin}, \sfe_i \otimes \nabla \eta_R] \,\d x\notag\\
&\qquad=\int_{\R^2 \setminus B_1(0)} \C[\nabla u^{\rm lin}, \sfe_i \otimes \nabla \eta_R] \,\d x\notag\\
&\qquad=\int_{\R^2 \backslash B_1(\hat{x})} -\mathrm{div}\, \C[\nabla u^{\rm lin}]\cdot\sfe_i\, \eta_R \, \d x - \int_{\partial B_1(\hat{x})} \Big(\C[\nabla u^{\rm lin}]\sfe_i\Big) \cdot\nu \,\d\sigma\notag\\
&\qquad=-\int_{\partial B_1(\hat{x})}\Big(\C[\nabla u^{\rm lin}]\sfe_i\Big)\cdot \nu \,\d\sigma,
\end{align}
where $\nu$ is the normal pointing outwards on $\partial B_1(\hat{x})$ (so the inward-pointing normal for the region on which the divergence theorem is applied, hence the sign). Note that the choice of a ball of radius $1$ was arbitrary: an application of the divergence theorem and the properties assumed of $u^{\mathrm{lin}}$ show that integral any simple closed contour enclosing the origin could be used and the argument would remain valid.

\paragraph{Third term.} Let us now turn to the final term in \cref{eq:splitting}. In this case, we can first use the fact that $\sum_{\ell\in\Lambda}D\eta_R = 0$ to write
\begin{align*}
 \sum_{\ell \in \Lambda} \nabla V(0)[\sfe_i \otimes \tilde{D} \eta_R]
  &=\sum_{\ell \in \Lambda} \nabla V(0)[\sfe_i \otimes (\tilde{D} \eta_R - D \eta_R)].
\end{align*}
Next, we apply the result of \Cref{lem:finitediffops}, giving
\begin{align*}
\sum_{\ell \in \Lambda} \nabla V(0)[\sfe_i \otimes \tilde{D} \eta_R]
  &=\sum_{\rho}\sum_{\ell \in \Omega_{\Gamma}, [\ell, \ell+\rho] \cap \Gamma \neq \emptyset} \nabla V(0)_{i \rho} D_{\sgn(\rho_2) \sfb_{12}} \eta_R (x+\rho),
\end{align*}
with $[\ell, \ell+\rho]$ denoting the line segment.

Up to discretisation error terms which vanish in the limit where $R\to\infty$, this sum can once more be replaced by an integral. More specifically, consider
\[ A_\rho = \{ x \in \Omega_\Gamma \colon [x, x+\rho] \cap \Gamma \neq \emptyset\}; \]
away from $\partial \Omega_\Gamma$, $A_\rho$ is just a strip of width $\lvert \rho_2 \rvert$ around $\Gamma$. We obtain
\begin{align}\label{eq:term3a}
 \sum_{\ell \in \Lambda} &\nabla V(0)[\sfe_i \otimes \tilde{D} \eta_R] \notag\\
  &=\sum_{\rho}\sum_{\ell \in \Omega_{\Gamma}, [\ell, \ell+\rho] \cap \Gamma \neq \emptyset} \nabla V(0)_{i \rho} D_{\sgn(\rho_2) \sfb_{12}} \eta_R (x+\rho)\notag\\
  &=\sum_\rho c_{\Lambda} \int_{A_\rho} \nabla V(0)_{i \rho} \nabla \eta_R (x) \cdot (\sgn(\rho_2) \sfb_{12}) \,\d x  + O(R^{-1})
\end{align}
At this point, we rely on the requirement that $\eta_R$ has been chosen such that $\frac{\partial\eta_R}{\partial x_2} =0$ in $A_\rho$, which is a neighbourhood of the branch cut $\Gamma$. Using the fact the $A_\rho$ is just a strip of width $\lvert \rho_2 \rvert$ we can then reduce the integral to a one-dimensional integral and thus find
\begin{align}\label{eq:term3}
\lim_{R \to \infty} \sum_{\ell \in \Lambda} \nabla V(0)[\sfe_i \otimes \tilde{D} \eta_R]
  \quad &=\lim_{R \to \infty} \sum_\rho c_{\Lambda} \int_1^\infty \nabla V(0)_{i \rho}\partial_1 \eta_R(x_1, \hat{x}_2) \rho_2 b_1 \,\d x_1 \notag\\
  \quad &= -\sum_\rho c_{\Lambda} \nabla V(0)_{i \rho} \rho_2 b_1\notag\\
  \quad &= -\sigma^0_{i2} b_1\notag\\
  \quad &= -(\sigma^0 n)_i |\sfb_{12}|,
\end{align}
where on the last line, we use the convention $b_2=0$.

\paragraph{Conclusion.} Our analysis has characterised each of the terms on the right-hand side of \cref{eq:splitting} has now showed that the first term in \cref{eq:splitting} as $R\to\infty$. In sum, we have showed that
\begin{equation*}
    \sum_{\ell\in\Lambda} f(\ell) = -\sigma^0 n |\sfb_{12}| - \int_{\partial B_1(0)} \C[\nabla u^{\text{lin}}]\nu\,\d\sigma,
\end{equation*}
and thus the proof of the theorem is complete. We note that \Cref{cor:zeronet} is a direct consequence of assuming that $\sigma^0=0$.

\section*{Acknowledgements}
\paragraph{Thanks.} The authors thank the International Centre for the Mathematical Sciences for providing facilities and accommodation support during a visit by TH to Edinburgh which enabled some of this work. TH also acknowledges partial support from UKRI during this work through NSF-UKRI grant, reference UKRI3611.

\paragraph{Open access.}
For the purpose of open access, the authors have applied a Creative Commons Attribution (CC-BY) licence to any Author Accepted Manuscript version arising from this submission.

\paragraph{AI usage statement.} The main results of in this manuscript were proved independently of AI, and the authors take full responsibility for their accuracy. An AI model was used to produce a first draft of the illustration in \Cref{fig:geometry}.

\printbibliography

\end{document}